
\documentclass[reqno,a4paper]{amsart}
\usepackage{amssymb}
\usepackage{amsmath}
\usepackage{amsfonts}

\setcounter{MaxMatrixCols}{10}

\topmargin=-1cm
\oddsidemargin=0.3cm
\evensidemargin=0.3cm
\textheight=24cm
\textwidth=16cm
\newtheorem{theorem}{Theorem}[section]
\theoremstyle{plain}

\newtheorem{corollary}{Corollary}[section]

\newtheorem{definition}{Definition}[section]
\newtheorem{example}{Example}[section]

\newtheorem{lemma}{Lemma}[section]

\numberwithin{equation}{section}
\input{tcilatex}

\begin{document}
\title[]{$f$-EIKONAL HELIX SUBMANIFOLDS AND $f$-EIKONAL HELIX CURVES}
\author{Evren Z\i plar}
\address{Department of Mathematics, Faculty of Science, University of
Ankara, Tando\u{g}an, Turkey}
\email{evrenziplar@yahoo.com}
\urladdr{}
\author{Ali \c{S}enol}
\address{Department of Mathematics, Faculty of Science, \c{C}ank\i r\i\
Karatekin University, \c{C}ank\i r\i , Turkey}
\email{asenol@karatekin.edu.tr}
\author{Yusuf Yayl\i }
\address{Department of Mathematics, Faculty of Science, University of
Ankara, Tando\u{g}an, Turkey}
\email{yayli@science.ankara.edu.tr}
\thanks{}
\urladdr{}
\date{}
\subjclass[2000]{ \ 53A04, 53B25, 53C40, 53C50.}
\keywords{Helix submanifold; Eikonal function; Helix line.\\
Corresponding author: Evren Z\i plar, e-mail: evrenziplar@yahoo.com}
\thanks{}

\begin{abstract}
Let $M\subset 
\mathbb{R}
^{n}$ be a Riemannian helix submanifold with respect to the unit direction $%
d\in 
\mathbb{R}
^{n}$ and $f:M\rightarrow 
\mathbb{R}
$ be a eikonal function. We say that $M$ is a $f$-eikonal helix submanifold
if for each $q\in M$ the angle between $\nabla f$ and $d$ is constant.Let $%
M\subset 
\mathbb{R}
^{n}$ be a Riemannian submanifold and $\alpha :I\rightarrow M$ be a curve
with unit tangent $T$. Let $f:M\rightarrow 
\mathbb{R}
$ be a eikonal function along the curve $\alpha $. We say that $\alpha $ is
a $f$-eikonal helix curve if the angle between $\nabla f$ and $T$ \ is
constant along the curve $\alpha $. $\nabla f$ will be called as the axis of
the $f$-eikonal helix curve.The aim of this article is to give that the
relations between $f$-eikonal helix submanifolds and $f$-eikonal helix
curves, and to investigate $f$-eikonal helix curves on Riemannian manifolds.
\end{abstract}

\maketitle

\section{Introduction}

In differential geometry of manifolds, an helix submanifold of $%
\mathbb{R}
^{n}$ with respect to a fixed direction $d$ in $%
\mathbb{R}
^{n}$ is defined by the property that tangent space makes a constant angle
with the fixed direction $d$ (helix direction) in [3]. Di Scala and Ruiz-Hern%
\'{a}ndez have introduced the concept of these manifolds in [3].

Recently, M. Ghomi worked out the shadow problem given by H.Wente. And, He
mentioned the shadow boundary in [6]. Ruiz-Hern\'{a}ndez investigated that
shadow boundaries are related to helix submanifolds in [10].

Helix hypersurfaces have been worked in nonflat ambient spaces in [4,5].
Cermelli and Di Scala have also studied helix hypersurfaces in liquid
cristals in [2].

The plan of this article is as follows. Section 2, we give some important
definitions and remarks which will be used in other sections.In section 3,
we define $f$-eikonal helix submanifolds and define $f$-eikonal helix
curves. And also, we give an important property between $f$-eikonal helix
submanifolds and $f$-eikonal helix curves, see Theorem 3.2. In Theorem 3.1
and 3.3, we show that when a curve on a manifold is $f$-eikonal helix curve.
Besides,we give the important relation between geodesic curves and $f$%
-eikonal helix curves, see Theorem 3.4. Section 4, in 3-dimensional
Riemannian manifold, we find out the axis of a $f$-eikonal helix curve and
we give the relation between the curvatures of the curve in Theorem 4.1.
Then, we give more important corollary relating to helix submanifolds. In
section 5, we specify the relation between $f$-eikonal helix curve and
general helix.

\section{Basic Definitions}

\begin{definition}
Given a submanifold $M\subset 
\mathbb{R}
^{n}$ and an unitary vector $d$ in $%
\mathbb{R}
^{n}$, we say that $M$ is a helix with respect to $d$ if for each $q\in M$
the angle between $d$ and $T_{q}M$ is constant.

Let us recall that a unitary vector $d$ can be decomposed in its tangent and
orthogonal components along the submanifold $M$, i.e. $d=\cos (\theta
)T^{\ast }+\sin (\theta )\xi $ with $\left\Vert T^{\ast }\right\Vert
=\left\Vert \xi \right\Vert =1$, where $T^{\ast }\in TM$ and $\xi \in
\vartheta (M)$.The angle between $d$ and $T_{q}M$ is constant if and only if
the tangential component of $d$ has constant length $\left\Vert \cos (\theta
)T^{\ast }\right\Vert =\cos (\theta )$. We can assume that $0<\theta <\frac{%
\pi }{2}$ and we can say that $M$ is a helix of angle $\theta $.

We will call $T^{\ast }$ and $\xi $ the tangent and normal directions of the
helix submanifold $M$. We can call $d$ the helix direction of $M$ and we
will assume $d$ always to be unitary [3].
\end{definition}

\begin{definition}
Let $M\subset 
\mathbb{R}
^{n}$ be a helix submanifold of angle $\theta \neq \frac{\pi }{2}$ w.r. to
the direction $d\in 
\mathbb{R}
^{n}$. We will call the integral curves of the tangent direction $T^{\ast }$
of the helix $M$, the helix lines of $M$ w.r.to $d$ [3].
\end{definition}

\noindent \textbf{Remark 2.1 }\textit{We say that }$\xi $\textit{\ is
parallel normal in the direction }$X\in TM$\textit{\ if }$\ \nabla
_{X}^{\perp }\xi =0$\textit{. Here, }$\nabla ^{\perp }$\textit{\ denotes the
normal connection of }$M$\textit{\ induced by the standard covariant
derivative of the Euclidean ambient. And, we denote by }$D$\textit{\ the
standard covariant derivative in }$%
\mathbb{R}
^{n}$\textit{\ and denote by }$\nabla $\textit{\ the induced covariant
derivative in }$M$\textit{. \ [3]. }

\begin{definition}
Let $M$ be a submanifold of the Riemannian manifold $%
\mathbb{R}
^{n}$ and let $D$ be the Riemannian connexion on $%
\mathbb{R}
^{n}$. For $C^{\infty \text{ }}$fields $X$ and $Y$ with domain $A$ on $M$
(and tangent to $M$), define $\nabla _{X}Y$ and $V(X,Y)$ on $A$ by
decomposing $D_{X}Y$ into unique tangential and normal components,
respectively; thus,%
\begin{equation*}
D_{X}Y=\nabla _{X}Y+V(X,Y)\text{. }
\end{equation*}%
Then, $\nabla $ is the Riemannian connexion on $M$ and $V$ is a symmetric
vector-valued 2-covariant $C^{\infty \text{ }}$tensor called the second
fundamental tensor. The above composition equation is called the Gauss
equation [7].
\end{definition}

\noindent \textbf{Remark 2.2 }\textit{Let us observe that for any helix
euclidean submanifold }$M$\textit{, the following system holds for every }$%
X\in TM$\textit{, where the helix direction }$d=\cos (\theta )T^{\ast }+\sin
(\theta )\xi $\textit{.\medskip \vspace{0in} }%
\begin{equation}
\cos (\theta )\nabla _{X}T^{\ast }-\sin (\theta )A^{\xi }(X)=0
\end{equation}%
\begin{equation}
\cos (\theta )V(X,T^{\ast })+\sin (\theta )\nabla _{X}^{\bot }\xi =0
\end{equation}%
\textit{[3].}

\begin{definition}
Let $(M,g)$ be a Riemannian manifold, where $g$ is the metric. Let $%
f:M\rightarrow 
\mathbb{R}
$ be a function and let $\nabla f$ be its gradient, i.e., $df(X)=g(\nabla f$ 
$,X)$. We say that $f$ is eikonal if it satisfies:%
\begin{equation*}
\left\Vert \nabla f\right\Vert =\text{constant.}
\end{equation*}%
[3].
\end{definition}

\begin{definition}
Let $\alpha =\alpha (t):I\subset 
\mathbb{R}
\rightarrow M$ be an immersed curve in 3-dimensional Riemannian manifold $M$
. The unit tangent vector field of $\alpha $ will be denoted by $T$. Also, $%
\kappa >0$ and $\tau $ will denote the curvature and torsion of $\alpha $,
respectively.Therefore if $\left\{ T,N,B\right\} $ is the Frenet frame of $%
\alpha $ and $\nabla $ is the Levi-Civita connection of $M$, then one can
write the Frenet equations of $\alpha $ as%
\begin{equation*}
\nabla _{T}T=\kappa N
\end{equation*}%
\begin{equation*}
\nabla _{T}N=-\kappa T+\tau B
\end{equation*}%
\begin{equation*}
\nabla _{T}B=-\tau N
\end{equation*}%
[1].
\end{definition}

\begin{definition}
Let $\alpha :I\subset 
\mathbb{R}
\rightarrow E^{n}$ be a curve in $E^{n}$ with arc-length parameter $s$ and
let $X$ be a unit constant vector of $E^{n}$. For all $s\in I$, if%
\begin{equation*}
\left\langle V_{1},X\right\rangle =\cos (\varphi ),\varphi \neq \frac{\pi }{2%
},\varphi =\text{constant,}
\end{equation*}%
then the curve $\alpha $ is called a general helix in $E^{n}$, where $V_{1}$
is the unit tangent vector of $\alpha $ at its point $\alpha (s)$ and $%
\varphi $ is a constant angle between the vector fields $V_{1}$ and $X$ [12].
\end{definition}

Throughout all section, the submanifolds $M\subset 
\mathbb{R}
^{n}$ have the induced metric by $%
\mathbb{R}
^{n}$.

\section{$f$-EIKONAL HELIX CURVES}

In this section, we define $f$-eikonal helix submanifolds and define $f$%
-eikonal helix curves. And also, we give an important property between $f$%
-eikonal helix submanifolds and $f$-eikonal helix curves, see Theorem 3.2.
In Theorem 3.1 and 3.3, we show that when a curve on a manifold is $f$%
-eikonal helix curve. Besides,we give the important relation between
geodesic curves and $f$-eikonal helix curves, see Theorem 3.4.

\begin{definition}
Let $M\subset 
\mathbb{R}
^{n}$ be a Riemannian helix submanifold with respect to the unit direction $%
d\in 
\mathbb{R}
^{n}$ and $f:M\rightarrow 
\mathbb{R}
$ be a eikonal function. We say that $M$ is a $f$-eikonal helix submanifold
if for each $q\in M$ the angle between $\nabla f$ and $d$ is constant.
\end{definition}

For definition 3.1, $\left\langle \nabla f,d\right\rangle =$ constant since $%
\left\Vert \nabla f\right\Vert $ and $d$ are constant.

\begin{example}
Let $M\subset 
\mathbb{R}
^{n}$ be a Riemannian helix submanifold with respect to the unit direction $%
d\in 
\mathbb{R}
^{n}$. Let us assume that the tangent component of $d$ equals $\nabla f$ for
a eikonal function $f:M\rightarrow 
\mathbb{R}
$. Because of the definition helix submanifold, we have $\left\langle \nabla
f,d\right\rangle =$ constant. That is, $M$ is a $f$-eikonal helix
submanifold.
\end{example}

\begin{definition}
Let $M\subset 
\mathbb{R}
^{n}$ be a Riemannian submanifold and $\alpha :I\rightarrow M$ be a curve
with unit tangent $T$. Let $f:M\rightarrow 
\mathbb{R}
$ be a eikonal function along the curve $\alpha $, i.e. $\left\Vert \nabla
f\right\Vert =$constant along the curve $\alpha .$We say that $\alpha $ is a 
$f$-eikonal helix curve if the angle between $\nabla f$ and $T$ \ is
constant along the curve $\alpha $. $\nabla f$ will be called as the axis of
the $f$-eikonal helix curve.
\end{definition}

\begin{example}
Let $M\subset 
\mathbb{R}
^{n}$ be a Riemannian submanifold and $\alpha :I\rightarrow M$ be a curve
with unit tangent $T$. Let $f:M\rightarrow 
\mathbb{R}
$ be a eikonal function along the curve $\alpha $. If $\nabla f$ \ equals $T$%
, then $\left\langle \nabla f\ ,\nabla f\ \right\rangle =$constant. That is, 
$\alpha $ is a $f$-eikonal helix curve.
\end{example}

\begin{example}
We consider the Riemannian manifold $M=%
\mathbb{R}
^{3}$. Let%
\begin{equation*}
f:M\rightarrow 
\mathbb{R}%
\end{equation*}%
\begin{equation*}
\left( x,y,z\right) \rightarrow f\left( x,y,z\right) =x^{2}+y^{2}+z
\end{equation*}%
be a function defined on $M$. Then, the curve%
\begin{equation*}
\alpha :I\subset 
\mathbb{R}
\rightarrow M
\end{equation*}%
\begin{equation*}
s\rightarrow \alpha \left( s\right) =(\cos \frac{s}{\sqrt{2}},\sin \frac{s}{%
\sqrt{2}},\frac{s}{\sqrt{2}})
\end{equation*}%
is a $f$-eikonal helix curve on $M$.

Firstly, we will show that $f$ is a eikonal function along the curve $\alpha 
$. If we compute $\nabla f$, we find $\nabla f$ as%
\begin{equation*}
\nabla f=\left( 2x,2y,1\right) \text{.}
\end{equation*}%
So, we get%
\begin{equation*}
\left\Vert \nabla f\right\Vert =\sqrt{4\left( x^{2}+y^{2}\right) +1}\text{.}
\end{equation*}%
And, if we compute $\left\Vert \nabla f\right\Vert $ along the curve $\alpha 
$, we find%
\begin{equation*}
\left\Vert \nabla f\right\Vert |_{\alpha }=\sqrt{5}=\text{constant.}
\end{equation*}%
That is, $f$ is a eikonal function along the curve $\alpha $.

Now, we will show that the angle between $\nabla f$ and $T$ (the unit
tangent of $\alpha $) is constant along the curve $\alpha $. Since%
\begin{equation*}
\nabla f|_{\alpha }=(2\cos \frac{s}{\sqrt{2}},2\sin \frac{s}{\sqrt{2}},1)
\end{equation*}%
and%
\begin{equation*}
T=(-\frac{1}{\sqrt{2}}\sin \frac{s}{\sqrt{2}},\frac{1}{\sqrt{2}}\cos \frac{s%
}{\sqrt{2}},\frac{1}{\sqrt{2}})\text{ ,}
\end{equation*}%
we obtain 
\begin{equation*}
\left\langle \nabla f|_{\alpha },T\right\rangle =\frac{1}{\sqrt{2}}=\cos
(\theta )
\end{equation*}%
along the curve $\alpha $, where $\theta $ is the angle between $\nabla f$
and $T$. Consequently, $\alpha $ is a $f$-eikonal helix curve on $M$.
\end{example}

\begin{example}
We consider the Riemannian manifold $M=%
\mathbb{R}
^{n}$. Let%
\begin{equation*}
f:M\rightarrow 
\mathbb{R}%
\end{equation*}%
\begin{equation*}
\left( x_{1},x_{2},...,x_{n}\right) \rightarrow f\left(
x_{1},x_{2},...,x_{n}\right) =a_{1}x_{1}+...+a_{n}x_{n}+c
\end{equation*}%
be a function defined on $M$, where $a_{1},...,a_{n},c$ are constant. Then,
all generalized helices with the axis $\left( a_{1},a_{2},...,a_{n}\right) $
are $f$-eikonal helices.

Let $\alpha $ be any generalized helice with the axis $X=\left(
a_{1},a_{2},...,a_{n}\right) $. Then, the angle between the unit tangent of $%
\alpha $ and $X$ is constant along the curve $\alpha $. On the other hand,
since $\nabla f=\left( a_{1},a_{2},...,a_{n}\right) $, we have $X=\nabla f$.
So, we can easily say that the angle between the unit tangent of $\alpha $
and $\nabla f$ is constant along the curve $\alpha $. Also, $\left\Vert
\nabla f\right\Vert =$constant. Consequently, the curve $\alpha $ is a $f$%
-eikonal helix. Since $\alpha $ is arbitrary, all generalized helices with
the axis $\left( a_{1},a_{2},...,a_{n}\right) $ are $f$-eikonal helices.
\end{example}

\begin{theorem}
Let $M\subset 
\mathbb{R}
^{n}$ be a Riemannian submanifold and $\alpha :I\rightarrow M$ be a curve
with unit tangent $T$. Let $f:M\rightarrow 
\mathbb{R}
$ be a eikonal function along the curve $\alpha $. Then, $\alpha $ is a $f$%
-eikonal helix curve if and only if $f$ is a linear function along the curve 
$\alpha $.
\end{theorem}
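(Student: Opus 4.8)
The plan is to reduce the whole statement to a single scalar identity: the derivative of $f$ along $\alpha$ coincides with the inner product $\left\langle \nabla f,T\right\rangle $. First I would take $\alpha $ parametrized by arc length $s$, so that $T=\alpha ^{\prime }(s)$ with $\left\Vert T\right\Vert =1$. By the defining property of the gradient (Definition 2.4), $df(T)=\left\langle \nabla f,T\right\rangle $, while the chain rule gives $df(T)=\frac{d}{ds}(f\circ \alpha )(s)$. Combining these,
\[
\frac{d}{ds}(f\circ \alpha )(s)=\left\langle \nabla f,T\right\rangle .
\]
This is the identity on which both directions rest; establishing it is the only genuinely conceptual step, and it is immediate once $\nabla f$ is read as the vector representing the differential $df$.

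Next I would rewrite the angle condition in terms of this quantity. Writing $\theta $ for the angle between $\nabla f$ and $T$ and using $\left\Vert T\right\Vert =1$, we have
\[
\left\langle \nabla f,T\right\rangle =\left\Vert \nabla f\right\Vert \cos (\theta ).
\]
Because $f$ is eikonal along $\alpha $, the factor $\left\Vert \nabla f\right\Vert $ is constant along $\alpha $. Hence $\cos (\theta )$ is constant along $\alpha $ if and only if $\left\langle \nabla f,T\right\rangle $ is constant along $\alpha $; and since $\theta \in \lbrack 0,\pi ]$, where cosine is strictly monotone and therefore injective, $\theta $ is constant exactly when $\cos (\theta )$ is.

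Assembling the equivalence is then routine. For the forward implication, if $\alpha $ is an $f$-eikonal helix curve then $\theta $ is constant, so $\left\langle \nabla f,T\right\rangle =\left\Vert \nabla f\right\Vert \cos (\theta )$ is a product of constants, whence $\frac{d}{ds}(f\circ \alpha )$ is constant and $f\circ \alpha $ is an affine (linear) function of $s$. For the converse, if $f$ is linear along $\alpha $ then $\frac{d}{ds}(f\circ \alpha )$ is constant, so $\left\langle \nabla f,T\right\rangle $ is constant; dividing by the constant $\left\Vert \nabla f\right\Vert $ makes $\cos (\theta )$ constant, hence $\theta $ constant, so $\alpha $ is an $f$-eikonal helix curve. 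I do not anticipate any serious obstacle: the eikonal hypothesis is precisely what allows one to pass between constancy of the angle and constancy of the derivative, and the whole argument is an equivalence among constants. The only point deserving mild care is the tacit assumption $\left\Vert \nabla f\right\Vert \neq 0$ along $\alpha $, which is needed both to divide and for the angle $\theta $ to be defined at all.
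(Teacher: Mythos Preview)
Your proof is correct and follows essentially the same route as the paper's own argument: both hinge on the identity $\frac{d}{ds}(f\circ\alpha)=\langle\nabla f,T\rangle$ obtained from the definition of the gradient, and then use the eikonal hypothesis $\|\nabla f\|=\text{const}$ to pass between constancy of $\langle\nabla f,T\rangle$ and constancy of the angle $\theta$. Your version is in fact slightly more careful, since you make explicit the injectivity of cosine on $[0,\pi]$ and the tacit assumption $\|\nabla f\|\neq 0$ needed for the angle to be defined.
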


\begin{proof}
Firstly, we assume that $\alpha $ is a $f$-eikonal helix curve. Since $f$ is
a eikonal function along the curve $\alpha $, $\left\Vert \nabla
f\right\Vert |_{\alpha }=$constant. On the other hand, we know that $%
X[f]=\left\langle \nabla f,X\right\rangle $ for each $X\in TM$ (see
Definition 2.4). In particular, for $X=T$, we have 
\begin{eqnarray*}
\left\langle \nabla f|_{\alpha },T\right\rangle &=&T[f] \\
&=&\frac{d}{ds}\left( f\circ \alpha \right) \text{ .}
\end{eqnarray*}%
And, since $\left\Vert \nabla f\right\Vert |_{\alpha }=$constant and $\alpha 
$ is a $f$-eikonal, $\left\langle \nabla f|_{\alpha },T\right\rangle = $%
constant. Thus, we obtain%
\begin{equation*}
\frac{d}{ds}\left( f\circ \alpha \right) =\text{constant .}
\end{equation*}%
In other words, $f|_{\alpha }$ is a linear function.

Conversely, we assume that $f|_{\alpha }$ is a linear function. Clearly, 
\begin{equation*}
\frac{d}{ds}\left( f\circ \alpha \right) =\text{constant .}
\end{equation*}%
Hence, we get%
\begin{equation*}
\left\langle \nabla f|_{\alpha },T\right\rangle =\text{constant .}
\end{equation*}%
And, since $\left\Vert \nabla f\right\Vert |_{\alpha }=$constant and $T$ is
unit, the angle between $\nabla f$ and $T$ is constant along the curve $%
\alpha $. That is, $\alpha $ is a $f$-eikonal helix curve.
\end{proof}

\noindent This completes the proof of the Theorem.

\begin{theorem}
Let $M\subset 
\mathbb{R}
^{n}$ be a $f$-eikonal helix submanifold .Then, the helix lines of $M$ are $%
f $-eikonal helix curves.
\end{theorem}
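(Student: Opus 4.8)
The plan is to reduce the constancy of the angle between $\nabla f$ and the unit tangent of a helix line to the two data already encoded in the definition of an $f$-eikonal helix submanifold: that $\left\Vert \nabla f\right\Vert$ is constant on $M$ (eikonal) and that $\langle \nabla f,d\rangle$ is constant on $M$ (the remark following Definition 3.1). Let $\alpha$ be a helix line of $M$, that is, an integral curve of the tangent direction $T^{\ast}$ (Definition 2.2, which requires $\theta\neq\frac{\pi}{2}$); then its unit tangent is exactly $T=T^{\ast}$.

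First I would record the decomposition $d=\cos(\theta)T^{\ast}+\sin(\theta)\xi$ from Definition 2.1, where $T^{\ast}\in TM$, $\xi\in\vartheta(M)$, and $\theta$ is the constant helix angle. The key observation is that, since $f:M\rightarrow\mathbb{R}$, its gradient $\nabla f$ is by definition a tangent vector field of $M$; hence $\langle\nabla f,\xi\rangle=0$. Pairing $d$ with $\nabla f$ therefore annihilates the normal term and yields
\[
\langle\nabla f,d\rangle=\cos(\theta)\langle\nabla f,T^{\ast}\rangle .
\]

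Next I would invoke that $M$ is an $f$-eikonal helix submanifold, so $\langle\nabla f,d\rangle$ is constant on $M$. Since $\cos(\theta)$ is a nonzero constant (because $\theta\neq\frac{\pi}{2}$), it follows that $\langle\nabla f,T^{\ast}\rangle$ is constant on $M$, in particular along $\alpha$. As $f$ is eikonal on $M$, the norm $\left\Vert\nabla f\right\Vert$ is constant, so along $\alpha$ we obtain
\[
\cos(\varphi)=\frac{\langle\nabla f,T^{\ast}\rangle}{\left\Vert\nabla f\right\Vert},
\]
where $\varphi$ is the angle between $\nabla f$ and $T=T^{\ast}$; both numerator and denominator are constant, so $\varphi$ is constant along $\alpha$. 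Together with the fact that $\left\Vert\nabla f\right\Vert$ is constant along $\alpha$ (inherited from its being constant on $M$), this is precisely the condition of Definition 3.2 that $\alpha$ be an $f$-eikonal helix curve.

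I do not expect a genuine obstacle here, since the argument is essentially a single orthogonal projection. The one point deserving care is the assertion that $\nabla f$ is tangent to $M$, which is what lets us discard $\langle\nabla f,\xi\rangle$; this is immediate from the intrinsic definition $df(X)=g(\nabla f,X)$ for $X\in TM$ in Definition 2.4, provided one keeps the intrinsic gradient of $f$ on $M$ distinct from an ambient gradient in $\mathbb{R}^{n}$.
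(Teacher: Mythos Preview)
Your argument is correct and follows essentially the same route as the paper: decompose $d=\cos(\theta)T^{\ast}+\sin(\theta)\xi$, pair with $\nabla f$, use $\nabla f\in TM$ to kill the $\xi$-term, and deduce constancy of $\langle\nabla f,T^{\ast}\rangle$ from that of $\langle\nabla f,d\rangle$. Your version is in fact slightly more careful, explicitly noting $\cos(\theta)\neq 0$ and then converting constancy of the inner product into constancy of the angle via the eikonal condition $\left\Vert\nabla f\right\Vert=\text{const}$.
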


\begin{proof}
Recall that $d=\cos (\theta )T^{\ast }+\sin (\theta )\xi $ is the
decomposition of $d$ in its tangent and normal components.Let $\alpha $ be
the helix line of $M$ with unit speed. That is, $\frac{d\alpha }{ds}=T^{\ast
}$. Hence, doing the dot product with $\nabla f$ in each part of $d$ along
the helix lines of $M$, we obtain:%
\begin{equation*}
\left\langle \nabla f,d\right\rangle =\cos (\theta )\left\langle \nabla f,%
\frac{d\alpha }{ds}\right\rangle +\sin (\theta )\left\langle \nabla f,\xi
\right\rangle
\end{equation*}%
Due to the fact that $M$ is a $f$-eikonal helix submanifold, $\left\langle
\nabla f,d\right\rangle =$ constant along the helix lines of $M$. On the
other hand, $\left\langle \nabla f,\xi \right\rangle =0$ since $\nabla f\in
TM$. So, $\left\langle \nabla f,\frac{d\alpha }{ds}\right\rangle $ is
constant along the helix lines of $M$. It follows that the helix lines of $M$
are $f$-eikonal helix curves.
\end{proof}

\begin{theorem}
Let $i:M\rightarrow 
\mathbb{R}
^{n}$ be a submanifold and let $f:M\rightarrow 
\mathbb{R}
$ be a eikonal function, where $M$ has the induced metric by $%
\mathbb{R}
^{n}$. Let us assume that $\alpha :I\subset 
\mathbb{R}
\rightarrow M$ is a unit speed (parametrized by arc length function $s$)
curve on $M$ with unit tangent $T$ . Then,$\alpha $ is a $f$-eikonal helix
curve if and only if 
\begin{equation*}
\beta (s)=\phi (\alpha (s))=\left( i(\alpha (s)),f(\alpha (s))\right)
\subset 
\mathbb{R}
^{n}\times 
\mathbb{R}
\text{ }
\end{equation*}%
is a general helix with the axis $d=(0,1)$. Here, $\phi :M\rightarrow 
\mathbb{R}
^{n}\times 
\mathbb{R}
$ is given by $\phi (p)=\left( i(p),f(p)\right) $ and $i:M\rightarrow 
\mathbb{R}
^{n}$ is given by $i(p)=p$, where $p\in M$.
\end{theorem}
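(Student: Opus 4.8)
The plan is to differentiate $\beta$ directly, read off its unit tangent, and reduce the constant-angle condition to the constancy of $\langle \nabla f,T\rangle$. Since $i$ is the inclusion and $M$ carries the metric induced by $\mathbb{R}^n$, along $\alpha$ we have $\frac{d}{ds}\,i(\alpha(s))=T$ and, by Definition 2.4, $\frac{d}{ds}\,f(\alpha(s))=df(T)=\langle \nabla f,T\rangle$. Hence
\begin{equation*}
\beta'(s)=\left( T,\ \langle \nabla f,T\rangle\right)\in\mathbb{R}^{n}\times\mathbb{R}.
\end{equation*}
This single computation is what everything rests on, and it is where the two coordinates of $\phi=(i,f)$ get linked: the first slot contributes the tangent $T$, the second contributes the derivative $\langle\nabla f,T\rangle$ of $f$ along the curve.

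The key structural observation I would make next is that $\beta$ is \emph{not} unit speed, since $\|\beta'(s)\|=\sqrt{1+\langle\nabla f,T\rangle^{2}}\geq 1$. Because Definition 2.6 phrases the general-helix condition through the \emph{unit} tangent $V_{1}$, and the unit tangent (hence the constant-angle condition) is invariant under reparametrization, there is no need to reparametrize $\beta$ by its own arc length; I would simply form $V_{1}=\beta'/\|\beta'\|$, which is well defined everywhere as $\|\beta'\|\geq 1>0$. Taking the inner product with the fixed axis $d=(0,1)$ then gives
\begin{equation*}
\langle V_{1},d\rangle=\frac{\langle\nabla f,T\rangle}{\sqrt{1+\langle\nabla f,T\rangle^{2}}}.
\end{equation*}

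With these two formulas in hand, the equivalence is a matter of comparing two constancy conditions. On the $\alpha$ side, since $f$ is eikonal ($\|\nabla f\|$ constant) and $T$ is unit, $\alpha$ is an $f$-eikonal helix curve if and only if $\cos\theta=\langle\nabla f,T\rangle/\|\nabla f\|$ is constant, i.e.\ if and only if $\langle\nabla f,T\rangle$ is constant. On the $\beta$ side, $\beta$ is a general helix with axis $d$ if and only if $\langle V_{1},d\rangle$ is constant. Writing $u=\langle\nabla f,T\rangle$, the function $u\mapsto u/\sqrt{1+u^{2}}$ is a strictly increasing bijection of $\mathbb{R}$ onto $(-1,1)$, so $\langle V_{1},d\rangle$ is constant exactly when $u$ is constant. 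Both conditions thus reduce to the constancy of $\langle\nabla f,T\rangle$, and chaining the equivalences in either order yields the stated biconditional.

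I expect the only genuine subtlety to be the non-unit-speed nature of $\beta$: one cannot work with $\langle\beta',d\rangle$ directly, because the normalizing factor $\|\beta'\|$ itself depends on $\langle\nabla f,T\rangle$. The strict monotonicity of $u/\sqrt{1+u^{2}}$ is precisely what is needed to run the ``only if'' direction, where constancy of the normalized angle must be converted back into constancy of $u$; without the eikonal hypothesis $\|\nabla f\|=\text{const}$ the $\alpha$-side reduction would likewise fail. Everything else is the routine unwinding of Definitions 2.4 and 2.6 together with the identification $\alpha'=T$ coming from $i$ being an isometric inclusion.
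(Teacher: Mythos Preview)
Your argument is correct and follows essentially the same route as the paper: compute $\beta'(s)=(T,\langle\nabla f,T\rangle)$, form the angle with $d=(0,1)$ to obtain $\langle V_{1},d\rangle=\langle\nabla f,T\rangle/\sqrt{1+\langle\nabla f,T\rangle^{2}}$, and reduce both sides of the biconditional to the constancy of $\langle\nabla f,T\rangle$. The only minor difference is in the converse direction, where the paper inverts algebraically to get $\langle\nabla f,T\rangle^{2}=\cos^{2}\theta/\sin^{2}\theta$, while you invoke the strict monotonicity of $u\mapsto u/\sqrt{1+u^{2}}$; your version is slightly cleaner since it avoids the (harmless, by continuity) passage from constancy of $u^{2}$ to constancy of $u$.
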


\begin{proof}
We consider the curve $\beta (s)=\left( i(\alpha (s)),f(\alpha (s))\right)
=\left( \alpha (s),f(\alpha (s))\right) $. Then, the tangent of $\beta $%
\begin{equation*}
\beta ^{%
{\acute{}}%
}(s)=\left( T,\frac{d(f\circ \alpha )}{ds}\right) \text{,}
\end{equation*}%
where $T$ is the unit tangent of $\alpha $.On the other hand, we know that $%
X[f]=\left\langle \nabla f,X\right\rangle $ for each $X\in TM$ (see
definition 2.4). In particular, for $X=T$,%
\begin{equation*}
T[f]=\left\langle \nabla f,T\right\rangle
\end{equation*}%
\begin{equation*}
\frac{d\alpha }{ds}[f]=\left\langle \nabla f,T\right\rangle
\end{equation*}%
and so, we have:%
\begin{equation*}
\frac{d(f\circ \alpha )}{ds}=\left\langle \nabla f,T\right\rangle \text{.}
\end{equation*}%
Therefore, we obtain%
\begin{equation}
\beta ^{%
{\acute{}}%
}(s)=\left( T,\left\langle \nabla f,T\right\rangle \right) \text{.}
\end{equation}%
Hence, doing the dot product with $d$ in each part of (3.1) , we get:%
\begin{equation}
\left\langle \beta ^{%
{\acute{}}%
}(s),d\right\rangle =\left\langle \nabla f,T\right\rangle \text{.}
\end{equation}%
From the equality (3.2), we can write%
\begin{equation*}
\left\Vert \beta ^{%
{\acute{}}%
}(s)\right\Vert .\cos (\theta )=\left\langle \nabla f,T\right\rangle \text{,}
\end{equation*}%
where $\theta $ is the angle between $d$ and $\beta ^{%
{\acute{}}%
}(s)$. It follows that%
\begin{equation}
\cos (\theta )=\frac{\left\langle \nabla f,T\right\rangle }{\sqrt{%
1+\left\langle \nabla f,T\right\rangle ^{2}}}\text{.}
\end{equation}%
If $\alpha $ is a $f$-eikonal helix curve,i.e. $\left\langle \nabla
f,T\right\rangle =$ constant, it can be easily seen that $\cos (\theta )=$%
constant by using (3.3). That is, $\beta $ is a general helix with the axis $%
d=(0,1)$.Conversely, we assume that $\beta $ is a general helix, i.e. $\cos
(\theta )=$constant.Hence, by using (3.3), we can write%
\begin{equation}
\left\langle \nabla f,T\right\rangle ^{2}=\frac{\cos ^{2}(\theta )}{\sin
^{2}(\theta )}=\text{constant }(\theta \neq 0)\text{.}
\end{equation}%
And so, from (3.4), we deduce that $\left\langle \nabla f,T\right\rangle =$%
constant. In other words, $\alpha $ is a $f$-eikonal helix curve.
\end{proof}

\begin{theorem}
Let $M\subset 
\mathbb{R}
^{n}$ be a complete connected smooth Riemannian submanifold without boundary
and let $M$ be isometric to a Riemannian product $N\times 
\mathbb{R}
$ . Let us assume that $f:M\rightarrow 
\mathbb{R}
$ be a non-trivial affine function (see main theorem in [9]). Then, all
geodesic curves on $M$ are $f$-eikonal helix curves.
\end{theorem}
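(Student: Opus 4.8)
The plan is to reduce this statement to Theorem 3.1, which says that a curve $\alpha$ is an $f$-eikonal helix curve if and only if $f$ is linear (that is, $\frac{d}{ds}(f\circ\alpha)$ is constant) along $\alpha$. So the entire task becomes: show that for every geodesic $\alpha$ on $M$, the composition $f\circ\alpha$ is an affine function of arc length. First I would record what it means for $f$ to be a \emph{non-trivial affine function} in the sense of the cited main theorem in [9]: the key property is that the Hessian of $f$ vanishes identically, i.e. $\nabla\,df = 0$, equivalently $\nabla_X(\nabla f) = 0$ for every $X\in TM$. This is precisely the differential-geometric characterization of affine functions on a Riemannian manifold, and it is exactly the structural hypothesis that the product splitting $M \cong N\times\mathbb{R}$ is designed to furnish.

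Next I would carry out the direct computation along a geodesic. Let $\alpha$ be a unit-speed geodesic, so $\nabla_T T = 0$ by definition of geodesic. Consider the function $h(s) := \frac{d}{ds}(f\circ\alpha) = \langle \nabla f, T\rangle$ (using $X[f]=\langle\nabla f,X\rangle$ from Definition 2.4, as already exploited in the proofs of Theorems 3.1 and 3.3). Differentiating once more in $s$ and using that the metric connection is compatible with $g$, I would write
\begin{equation*}
\frac{d}{ds}\langle \nabla f, T\rangle = \langle \nabla_T(\nabla f), T\rangle + \langle \nabla f, \nabla_T T\rangle.
\end{equation*}
The second term vanishes because $\alpha$ is a geodesic, and the first term vanishes because the Hessian of $f$ is zero (affine function). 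Hence $h'(s)=0$, so $\frac{d}{ds}(f\circ\alpha)$ is constant, i.e. $f$ is linear along $\alpha$.

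Finally I would invoke Theorem 3.1 to conclude that $\alpha$ is an $f$-eikonal helix curve. One point worth checking, to stay honest with the hypotheses of Theorem 3.1, is that $f$ is eikonal along $\alpha$, i.e. $\|\nabla f\|$ is constant along $\alpha$. This follows from the same vanishing-Hessian computation: $\frac{d}{ds}\|\nabla f\|^2 = 2\langle \nabla_T(\nabla f), \nabla f\rangle = 0$, so in fact $\|\nabla f\|$ is constant not just along one geodesic but globally (an affine function has gradient of constant length), which is consistent with the eikonal hypothesis and makes the application of Theorem 3.1 clean.

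The main obstacle I anticipate is not the computation, which is routine once the Hessian is known to vanish, but rather extracting and justifying the correct characterization of ``non-trivial affine function'' from the external reference [9]. The whole argument hinges on translating the product-structure hypothesis $M\cong N\times\mathbb{R}$ together with the cited theorem into the clean analytic statement $\nabla\,df=0$; the role of the $\mathbb{R}$-factor is to guarantee the existence of a genuinely non-constant such $f$ (essentially the projection onto the Euclidean line factor), so that the statement is non-vacuous. Once $\nabla\,df=0$ is in hand, the geodesic condition $\nabla_T T=0$ does all the remaining work, and the reduction through Theorem 3.1 is immediate.
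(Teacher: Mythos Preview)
Your argument is correct. The route differs from the paper's in a small but worth-noting way. The paper takes the \emph{definition} of affine function from [8,9] to be precisely that $f\circ\alpha$ is an affine function of $s$ for every unit geodesic $\alpha$; so it writes $f(\alpha(s))=as+b$ immediately, differentiates once to get $\langle\nabla f,T\rangle=\text{const}$, and then cites Lemma~2.3 of [11] for $\|\nabla f\|=\text{const}$, concluding directly from the definition of $f$-eikonal helix curve (it does not pass through Theorem~3.1). Your version instead unpacks ``affine'' as $\nabla_X\nabla f=0$ and carries out the second-derivative computation by hand, using the geodesic equation for one term and the vanishing Hessian for the other; the same Hessian identity then gives you the eikonal property without appealing to [11]. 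The paper's proof is shorter because it outsources both facts to external references; yours is more self-contained and makes the mechanism (parallel gradient + geodesic) explicit. Your detour through Theorem~3.1 at the end is harmless but unnecessary: once you have $\langle\nabla f,T\rangle$ constant and $\|\nabla f\|$ constant, the angle is constant by definition, which is exactly how the paper closes.
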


\begin{proof}
Since $f:M\rightarrow 
\mathbb{R}
$ is a affine function, for each unit geodesic $\alpha :(-\infty ,\infty
)\rightarrow M$ there are constants $a$ and $b\in 
\mathbb{R}
$ such that%
\begin{equation*}
f\left( \alpha (s)\right) =as+b\text{.}
\end{equation*}%
for all $s\in (-\infty ,\infty )$ (see [8] or see [9]). On the other hand,
we know that 
\begin{equation*}
X[f]=\left\langle \nabla f,X\right\rangle
\end{equation*}%
for each $X\in TM$. In particular, for $X=T$ (the unit tangent of $\alpha $%
), 
\begin{equation*}
T[f]=\left\langle \nabla f,T\right\rangle
\end{equation*}%
\begin{equation*}
\frac{d\alpha }{ds}[f]=\left\langle \nabla f,T\right\rangle
\end{equation*}%
and so, we have 
\begin{equation}
\frac{d(f\circ \alpha )}{ds}=\left\langle \nabla f,T\right\rangle \text{.}
\end{equation}%
Moreover, since $f\left( \alpha (s)\right) =as+b$, $\dfrac{d(f\circ \alpha )%
}{ds}=$constant. Hence, from (3.5), we obtain 
\begin{equation*}
\left\langle \nabla f,T\right\rangle =\text{constant}
\end{equation*}%
along the curve $\alpha $.On the other hand, from Lemma 2.3 (see [11]), $%
\left\Vert \nabla f\right\Vert =$constant. Consequently, all geodesic curves
on $M$ are $f$-eikonal helix curves.
\end{proof}

\begin{example}
In Theorem 3.4., we take $M$ to be the cylindrical surface $S^{1}\times 
\mathbb{R}
$. And, we take $f$ to be the function%
\begin{equation*}
f:S^{1}\times 
\mathbb{R}
\rightarrow 
\mathbb{R}%
\end{equation*}%
\begin{equation*}
\left( x,t\right) \rightarrow f\left( x,t\right) =t\text{ .}
\end{equation*}%
Then, the curves $\alpha $ in the form%
\begin{equation*}
\alpha \left( s\right) =\left( \cos (c\frac{s}{\sqrt{c^{2}+a^{2}}}+d),\sin (c%
\frac{s}{\sqrt{c^{2}+a^{2}}}+d),a\frac{s}{\sqrt{c^{2}+a^{2}}}+b\right)
\end{equation*}%
$f$- eikonal helix curves since all geodesic curves on $M$ are the curves $%
\alpha $ with the unit tangent $T$, where $c^{2}+a^{2}$ $\neq 0$. Here, $a$,$%
b$,$c$,$d$ are real numbers.

In fact, $f\left( \alpha \left( s\right) \right) =a\dfrac{s}{\sqrt{%
c^{2}+a^{2}}}+b$ and $\dfrac{d\left( f\circ \alpha \right) }{ds}=$constant.
So, $\left\langle \nabla f,T\right\rangle =\dfrac{d(f\circ \alpha )}{ds}=$%
constant. On the other hand, $\left\Vert \nabla f\right\Vert =$constant
since $f$ is an affine function. Consequently, since $\left\Vert \nabla
f\right\Vert =$constant, $\left\Vert T\right\Vert =1$ and $\left\langle
\nabla f,T\right\rangle =$constant, the angle between $\nabla f$ and $T$
along the curves $\alpha $. In other words, the curves $\alpha $ are $f$%
-eikonal helix curves.
\end{example}

\section{THE AXIS OF $f$-EIKONAL HELIX CURVES}

In this seciton, in 3-dimensional Riemannian manifold, we find out the axis
of a $f$-eikonal helix curve and we give the relation between the curvatures
of the curve in Theorem 4.1. Then, we give more important corollary relating
to helix submanifolds

\begin{theorem}
Let $M\subset $ $%
\mathbb{R}
^{4}$ be a 3-dimensional Riemannian manifold and let $M$ be a complete
connected smooth without boundary. Also, let $M$ be isometric to a
Riemannian product $N\times 
\mathbb{R}
$ . Let us assume that $f:M\rightarrow 
\mathbb{R}
$ be a non-trivial affine function (see main theorem in [9]) and $\alpha
:I\rightarrow M$ be a $f$-eikonal helix curve. Then, the following
properties are hold:\bigskip

\textit{(1) The axis of }$\alpha $: 
\begin{equation*}
\nabla f=\left\Vert \nabla f\right\Vert \left( \cos (\theta )T+\sin (\theta
)B\right) \text{,}
\end{equation*}

where $\theta $ is constant.\medskip \bigskip

(2) $\dfrac{\tau }{\kappa }=$constant.
\end{theorem}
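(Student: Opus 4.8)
The plan is to work with the Frenet apparatus $\{T,N,B\}$ of the curve $\alpha$ and exploit the fact that $\nabla f$ has constant length and makes a constant angle with $T$. Since $f$ is affine and $\alpha$ is a $f$-eikonal helix curve, we know from the proof of Theorem 3.4 that $\langle \nabla f,T\rangle$ is constant and $\|\nabla f\|$ is constant, so we may write $\langle \nabla f,T\rangle=\|\nabla f\|\cos(\theta)$ with $\theta$ constant. First I would decompose $\nabla f$ in the orthonormal Frenet frame as $\nabla f=aT+bN+cB$ where $a=\langle\nabla f,T\rangle$, $b=\langle\nabla f,N\rangle$, $c=\langle\nabla f,B\rangle$. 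The goal for part (1) is to show $b\equiv 0$ and that $a,c$ are constant with $a=\|\nabla f\|\cos\theta$, $c=\|\nabla f\|\sin\theta$.

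The key computational step is to differentiate $\nabla f$ along $\alpha$ using the Levi-Civita connection and the Frenet equations from Definition 2.5. For an affine function on $M$ isometric to $N\times\mathbb{R}$, the crucial property (this is the content of the cited main theorem in [9] together with Lemma 2.3 of [11]) is that $\nabla f$ is a parallel vector field, i.e. $\nabla_X\nabla f=0$ for all $X\in TM$; equivalently the Hessian of $f$ vanishes. I would invoke this to get $\nabla_T(\nabla f)=0$. Expanding $\nabla_T(aT+bN+cB)=0$ and substituting $\nabla_T T=\kappa N$, $\nabla_T N=-\kappa T+\tau B$, $\nabla_T B=-\tau N$, then collecting the $T$, $N$, $B$ components yields the system
\begin{align*}
a'-b\kappa &=0,\\
a\kappa+b'-c\tau &=0,\\
b\tau+c' &=0.
\end{align*}
Since $a=\langle\nabla f,T\rangle$ is constant we have $a'=0$, and $\kappa>0$ forces $b=0$ from the first equation. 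Feeding $b=0$ back into the third equation gives $c'=0$, so $c$ is constant, and then $\|\nabla f\|^2=a^2+c^2$ constant is automatically consistent. Writing $a=\|\nabla f\|\cos\theta$ and $c=\|\nabla f\|\sin\theta$ gives exactly the stated form $\nabla f=\|\nabla f\|(\cos(\theta)T+\sin(\theta)B)$ with $\theta$ constant, establishing (1).

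For part (2), I would use the one remaining relation, the middle equation of the system, which with $b=0$ reads $a\kappa-c\tau=0$, i.e. $\|\nabla f\|\cos\theta\,\kappa=\|\nabla f\|\sin\theta\,\tau$. Since $\|\nabla f\|\neq 0$ and $\theta$ is a fixed constant, this rearranges to $\tfrac{\tau}{\kappa}=\cot\theta=\mathrm{constant}$, which is the desired conclusion. The main obstacle I anticipate is justifying that $\nabla_T(\nabla f)=0$ rigorously: one must confirm that an affine function in the sense of [9] really has parallel gradient (vanishing Hessian) and that this parallelism in the ambient structure $N\times\mathbb{R}$ descends to the statement $\nabla_T\nabla f=0$ along the curve. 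Once that parallelism is in hand, everything else is the routine Frenet bookkeeping sketched above.
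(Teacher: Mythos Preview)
Your proposal is correct and follows essentially the same route as the paper: both use that an affine $f$ has parallel gradient (Lemma~2.3 of [11]) so $\nabla_T\nabla f=0$, combine this with the Frenet equations and the constancy of $\langle\nabla f,T\rangle$ to force $\langle\nabla f,N\rangle=0$, and then read off $\tau/\kappa=\cot\theta$. The only cosmetic difference is that you expand $\nabla_T(aT+bN+cB)=0$ to obtain all three component equations at once, whereas the paper differentiates the inner products $\langle\nabla f,T\rangle$ and $\langle\nabla f,N\rangle$ one at a time; the resulting equations are identical.
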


\ \ \ \ \ \ \ 

\begin{proof}
(1) Since $\alpha $ is $f$-eikonal helix curve, we can write%
\begin{equation}
\left\langle \nabla f,T\right\rangle =\text{constant.}
\end{equation}%
If we take the derivative in each part of (4.1) in the direction $T$ on $M$,
we have%
\begin{equation}
\left\langle \nabla _{T}\nabla f,T\right\rangle +\left\langle \nabla
f,\nabla _{T}T\right\rangle =0\text{.}
\end{equation}%
On the other hand, from Lemma 2.3 (see [11]), $\nabla f$ is parallel in $M$,
i.e. $\nabla _{X}\nabla f=0$ for arbitrary $X\in TM$. So, we get $\nabla
_{T}\nabla f=0$.Then, by using (4.2) and Frenet formulas, we obtain%
\begin{equation}
\kappa \left\langle \nabla f,N\right\rangle =0\text{.}
\end{equation}%
Since $\kappa $ is assumed to be positive, (4.3) implies that $\left\langle
\nabla f,N\right\rangle =0$. Hence, we can write the axis of $\alpha $ as 
\begin{equation}
\nabla f=\lambda _{1}T+\lambda _{2}B\text{.}
\end{equation}%
Doing the dot product with $T$ in each part of (4.4), we get 
\begin{equation}
\left\langle \nabla f,T\right\rangle =\lambda _{1}=\left\Vert \nabla
f\right\Vert \cos (\theta )\text{,}
\end{equation}%
where $\theta $ is the angle between $\nabla f$ and $T$. And, since $%
\left\Vert \nabla f\right\Vert ^{2}=\lambda _{1}^{2}+\lambda _{2}^{2}$, we
also have 
\begin{equation*}
\lambda _{2}=\left\Vert \nabla f\right\Vert \sin (\theta )
\end{equation*}%
by using (4.5).Finally, the axis of $\alpha $%
\begin{equation*}
\nabla f=\left\Vert \nabla f\right\Vert \left( \cos (\theta )T+\sin (\theta
)B\right) \text{.}
\end{equation*}%
(2) From the proof of (1), we can write%
\begin{equation}
\left\langle \nabla f,N\right\rangle =0.
\end{equation}%
If we take the derivative in each part of (4.6) in the direction $T$ on $M$,
we have%
\begin{equation}
\left\langle \nabla _{T}\nabla f,N\right\rangle +\left\langle \nabla
f,\nabla _{T}N\right\rangle =0\text{.}
\end{equation}%
And, from the proof of (1), $\nabla _{T}\nabla f=0$. Hence, from (4.7), 
\begin{equation}
\left\langle \nabla f,\nabla _{T}N\right\rangle =0\text{.}
\end{equation}%
By using Frenet formulas, from (4.8) we obtain%
\begin{equation}
-\kappa \left\langle \nabla f,T\right\rangle +\tau \left\langle \nabla
f,B\right\rangle =0\text{.}
\end{equation}%
On the other hand, by using (4.4), we can write as $\left\langle \nabla
f,T\right\rangle =\lambda _{1}$ and $\left\langle \nabla f,B\right\rangle
=\lambda _{2}$.Since $\lambda _{1}=\left\Vert \nabla f\right\Vert \cos
(\theta )$ and $\lambda _{2}=\left\Vert \nabla f\right\Vert \sin (\theta )$
from the proof of (1), we obtain%
\begin{equation}
\left\langle \nabla f,T\right\rangle =\left\Vert \nabla f\right\Vert \cos
(\theta )\text{ and }\left\langle \nabla f,B\right\rangle =\left\Vert \nabla
f\right\Vert \sin (\theta )\text{.}
\end{equation}%
So, by using (4.9) and the equalities (4.10), we have%
\begin{equation*}
\dfrac{\tau }{\kappa }=\cot (\theta )\text{=constant.}
\end{equation*}%
This completes the proof of the Theorem.
\end{proof}

The latter Theorem 4.1 has the following corollaries.

\begin{corollary}
Let $M\subset $ $%
\mathbb{R}
^{4}$ be a 3-dimensional Riemannian $f$-helix submanifold and let $M$ be a
complete connected smooth without boundary. Also, let $M$ be isometric to a
Riemannian product $N\times 
\mathbb{R}
$ . Let us assume that $f:M\rightarrow 
\mathbb{R}
$ be a non-trivial affine function (see main theorem in [9]).Then, $\dfrac{%
\tau }{\kappa }$ is constant along the helix lines of $M$.
\end{corollary}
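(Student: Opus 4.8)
The plan is to deduce this corollary as an immediate composition of the two main results already established, namely Theorem 3.2 and part (2) of Theorem 4.1. The observation is that the hypotheses of the corollary have been arranged precisely so that both of these theorems apply, and the conclusion is obtained by chaining them.

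First I would invoke Theorem 3.2. Since $M$ is assumed to be an $f$-eikonal helix submanifold (an \emph{$f$-helix submanifold} in the corollary's terminology), Theorem 3.2 tells us directly that every helix line of $M$ is an $f$-eikonal helix curve. Thus, fixing an arbitrary helix line $\alpha$ of $M$ (an integral curve of the tangent direction $T^{\ast}$ of the helix, reparametrized by arc length), we know that $\left\langle \nabla f,T\right\rangle$ is constant along $\alpha$, i.e. $\alpha$ satisfies the defining property required to feed into Theorem 4.1.

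Next I would verify that the remaining hypotheses of Theorem 4.1 are all in force for this $\alpha$: $M\subset \mathbb{R}^{4}$ is a $3$-dimensional, complete, connected, smooth Riemannian manifold without boundary, it is isometric to a product $N\times \mathbb{R}$, and $f$ is a non-trivial affine function. These are exactly the standing assumptions of the corollary, so Theorem 4.1 applies to the $f$-eikonal helix curve $\alpha$. Part (2) of that theorem then yields $\dfrac{\tau }{\kappa }=\cot (\theta )=\text{constant}$ along $\alpha$, where $\theta$ is the constant angle between $\nabla f$ and $T$. Since $\alpha$ was an arbitrary helix line, this establishes that $\dfrac{\tau }{\kappa }$ is constant along the helix lines of $M$.

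The only point requiring care — and hence the step I expect to be the main (minor) obstacle — is checking that the helix lines genuinely qualify as curves to which the Frenet-frame machinery of Theorem 4.1 applies: in particular one needs the curvature $\kappa$ to be positive along them, since Theorem 4.1 assumes $\kappa>0$ so that $N$ and $B$ are well defined and the ratio $\tau/\kappa$ is meaningful. Apart from this regularity check, the argument is a direct concatenation, so I would keep the write-up short: state the application of Theorem 3.2, then the application of Theorem 4.1(2), and conclude.
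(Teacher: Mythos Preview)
Your proposal is correct and follows exactly the paper's own approach: the authors simply write that the corollary ``is obvious by using Theorem 4.1 and Theorem 3.2,'' which is precisely the concatenation you describe. Your added caveat about needing $\kappa>0$ for the Frenet apparatus is a fair observation, but the paper makes the same implicit assumption without comment.
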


\begin{proof}
It is obvious by using Theorem 4.1 and Theorem 3.2.
\end{proof}

\begin{corollary}
Let $M\subset $ $%
\mathbb{R}
^{4}$ be a 3-dimensional Riemannian $f$-helix submanifold and let $M$ be a
complete connected smooth without boundary. Also, let $M$ be isometric to a
Riemannian product $N\times 
\mathbb{R}
$ . Let us assume that $f:M\rightarrow 
\mathbb{R}
$ be a non-trivial affine function (see main theorem in [9]).Then,%
\begin{equation*}
\nabla f=\left\Vert \nabla f\right\Vert \left( \cos (\theta )T+\sin (\theta
)B\right)
\end{equation*}%
along the helix line of $M$. In other words, $\nabla f$ is the axis of the
helix line of $M$.
\end{corollary}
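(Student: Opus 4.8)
The plan is to obtain this corollary as a direct consequence of the two results already established: Theorem 3.2, which tells us that the helix lines of an $f$-eikonal helix submanifold are themselves $f$-eikonal helix curves, and part (1) of Theorem 4.1, which records the explicit form of the axis of any $f$-eikonal helix curve on a product manifold of the required type. First I would verify that the standing hypotheses on $M$ and $f$ are exactly those needed by both theorems: $M\subset\mathbb{R}^{4}$ is a $3$-dimensional complete connected smooth Riemannian submanifold without boundary, isometric to $N\times\mathbb{R}$, and $f$ is a non-trivial affine function, while $M$ is in addition an $f$-eikonal helix submanifold. Since these are precisely the hypotheses shared by Theorem 3.2 and Theorem 4.1, no new geometric input is required.

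With the hypotheses matched, the first step is to invoke Theorem 3.2. Because $M$ is an $f$-eikonal helix submanifold, each helix line $\alpha$ of $M$ is an $f$-eikonal helix curve; in particular $\left\langle \nabla f,T\right\rangle$ is constant along $\alpha$, where $T$ is the unit tangent of $\alpha$. The second step is to apply Theorem 4.1(1) to this very curve $\alpha$: since $\alpha$ is an $f$-eikonal helix curve lying on a manifold $M$ that satisfies all the hypotheses of Theorem 4.1, the conclusion of that theorem holds verbatim, yielding
\begin{equation*}
\nabla f=\left\Vert \nabla f\right\Vert \left( \cos (\theta )T+\sin (\theta )B\right)
\end{equation*}
along $\alpha$, with $\{T,N,B\}$ the Frenet frame of the helix line and $\theta$ constant. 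This is exactly the asserted formula, and it exhibits $\nabla f$ as the axis of the helix line.

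I do not expect any serious obstacle, as the corollary is essentially the composition of two finished results. The only matters warranting care are bookkeeping ones. One should confirm that the phrase ``$f$-helix submanifold'' appearing in the statement is read as ``$f$-eikonal helix submanifold'' in the sense of Definition 3.1, so that Theorem 3.2 genuinely applies. One should also recall that Theorem 4.1 presupposes a well-defined Frenet frame with $\kappa>0$ along $\alpha$; hence the identity holds on the portion of each helix line where the curvature does not vanish, which is where $N$ and $B$ are defined in the first place. Granting this, the implication Theorem 3.2 $\Rightarrow$ Theorem 4.1(1) delivers the result at once.
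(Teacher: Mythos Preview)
Your proposal is correct and follows exactly the paper's own route: the corollary is obtained by combining Theorem~3.2 (helix lines of an $f$-eikonal helix submanifold are $f$-eikonal helix curves) with Theorem~4.1(1) (the axis formula for an $f$-eikonal helix curve). Your additional remarks on the terminology and on the need for $\kappa>0$ for the Frenet frame are appropriate clarifications and do not alter the argument.
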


\begin{proof}
It is obvious by using Theorem 4.1 and Theorem 3.2.
\end{proof}

\section{THE RELATION BETWEEN $f$-EIKONAL HELIX CURVE AND GENERAL HELIX}

In section, we specify the relation between $f$-eikonal helix curve and
general helix.

\begin{lemma}
Let $M\subset 
\mathbb{R}
^{n}$ be a Riemannian helix submanifold with respect to the unit direction $%
d\in 
\mathbb{R}
^{n}$ and $f:M\rightarrow 
\mathbb{R}
$ be a function. Let us assume that $\alpha :I\subset 
\mathbb{R}
\rightarrow M$ is a unit speed (parametrized by arc length function $s$)
curve on $M$ with unit tangent $T$ . Then,the normal component $\xi $ of $d$
is parallel normal in the direction $T$ if and only if $\ (\nabla f)^{%
{\acute{}}%
}\in $ $TM$ along the curve $\alpha $, where $T^{\ast }=\nabla f$ is the
unit tangent component of the direction $d$.
\end{lemma}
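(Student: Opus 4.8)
The plan is to work with the structural equations of a helix submanifold recorded in Remark 2.2 and to relate the normal‐connection condition $\nabla_T^\perp \xi = 0$ to the tangency of the derivative of $\nabla f = T^\ast$. Since $d = \cos(\theta)T^\ast + \sin(\theta)\xi$ is a fixed vector in $\mathbb{R}^n$, its ambient derivative $D_T d$ vanishes. First I would expand $D_T d$ using the Gauss equation (Definition 2.3) applied to $T^\ast$ together with the Weingarten formula for $\xi$, namely $D_T\xi = -A^\xi(T) + \nabla_T^\perp \xi$. Collecting tangential and normal parts of $D_T d = 0$ reproduces exactly the two equations (2.1) and (2.2) of Remark 2.2, so these are available without extra work.

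The core computation is to differentiate $T^\ast = \nabla f$ along $\alpha$ in the ambient space and split the result. The key step is to write
\begin{equation*}
(\nabla f)^{\acute{}} = D_T T^\ast = \nabla_T T^\ast + V(T,T^\ast),
\end{equation*}
where $\nabla_T T^\ast \in TM$ is the tangential part and $V(T,T^\ast) \in \vartheta(M)$ is the normal part. Thus the assertion ``$(\nabla f)^{\acute{}} \in TM$ along $\alpha$'' is equivalent to the vanishing of the normal component, i.e. to $V(T,T^\ast) = 0$. The plan is therefore to show that $V(T,T^\ast) = 0$ holds if and only if $\nabla_T^\perp \xi = 0$, which is precisely the claim that $\xi$ is parallel normal in the direction $T$.

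That equivalence follows immediately from equation (2.2) of Remark 2.2, specialized to $X = T$:
\begin{equation*}
\cos(\theta)\,V(T,T^\ast) + \sin(\theta)\,\nabla_T^\perp \xi = 0.
\end{equation*}
Since $M$ is a helix of angle $\theta \neq \frac{\pi}{2}$, both $\cos(\theta) \neq 0$ and $\sin(\theta) \neq 0$, so this is a nontrivial linear relation between the two normal vectors $V(T,T^\ast)$ and $\nabla_T^\perp \xi$. Hence $V(T,T^\ast) = 0$ forces $\nabla_T^\perp \xi = 0$, and conversely $\nabla_T^\perp \xi = 0$ forces $V(T,T^\ast) = 0$; the two vanish together. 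Combining this with the decomposition of $(\nabla f)^{\acute{}}$ above yields both directions of the stated equivalence.

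I expect the main obstacle to be bookkeeping rather than depth: one must be careful that ``$(\nabla f)^{\acute{}}$'' is interpreted as the ambient derivative $D_T(\nabla f)$ (so that its normal component is the second fundamental form term $V(T,T^\ast)$), and that the hypothesis $T^\ast = \nabla f$ is used only to identify the tangent component of $d$ with the gradient, not to assume any eikonal property of $f$. I would also state explicitly at the start that $\cos(\theta),\sin(\theta) \neq 0$ so that equation (2.2) can be read as a genuine equivalence; this nondegeneracy is the one point where the helix angle condition $\theta \neq \frac{\pi}{2}$ (and implicitly $\theta \neq 0$) is essential.
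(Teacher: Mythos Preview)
Your proposal is correct and follows essentially the same route as the paper: decompose $(\nabla f)'=D_T T^\ast=\nabla_T T^\ast+V(T,T^\ast)$ via the Gauss equation, then invoke equation~(2.2) with $X=T$ and the nondegeneracy $0<\theta<\tfrac{\pi}{2}$ to conclude that $V(T,T^\ast)=0$ and $\nabla_T^\perp\xi=0$ are equivalent. The only difference is expository---you rederive (2.1)--(2.2) from $D_T d=0$, whereas the paper simply cites Remark~2.2.
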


\begin{proof}
We assume that the normal component $\xi $ of $d$ is parallel normal in the
direction $T$. Since $T$ and $\nabla f$ $\in TM$, from the Gauss equation in
Definition 2.3,%
\begin{equation}
D_{T}\nabla f=\nabla _{T}\nabla f+V(T,\nabla f)
\end{equation}%
According to this Lemma, since the normal component $\xi $ of $d$ is
parallel normal in the direction $T$, i.e.$\nabla _{T}^{\bot }\xi =0$ (see
Remark 2.1), from (2.2) in Remark 2.2 ($0<\theta <\frac{\pi }{2}$) 
\begin{equation}
V(T,\nabla f)=0
\end{equation}%
So, by using (5.1),(5.2) and Frenet formulas, we have:%
\begin{equation*}
D_{T}\nabla f=\frac{d\nabla f}{ds}=(\nabla f)^{%
{\acute{}}%
}=\nabla _{T}\nabla f\text{.}
\end{equation*}%
That is, the vector field $(\nabla f)^{%
{\acute{}}%
}\in TM$ along the curve $\alpha $, where $TM$ is the tangent space of $M$.

Conversely, let us assume that $(\nabla f)^{%
{\acute{}}%
}\in $ $TM$ along the curve $\alpha $. Then, from Gauss equation, $%
V(T,\nabla f)=0$. Hence, from (2.2) in Remark 2.2 ($0<\theta <\frac{\pi }{2}$%
), $\nabla _{T}^{\bot }\xi =0$ . That is, the normal component $\xi $ of $d$
is parallel normal in the direction $T$. This completes the proof.
\end{proof}

\begin{lemma}
All $f$-eikonal helix curves with the constant axis $\nabla f$ are general
helices.
\end{lemma}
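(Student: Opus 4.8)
The plan is to reduce the statement directly to the definition of a general helix (Definition 2.6) by exhibiting the required unit constant vector. The hypothesis that the axis $\nabla f$ is constant means precisely that $\nabla f$ is a fixed vector in $\mathbb{R}^{n}$ along the curve $\alpha$, and since $f$ is eikonal we additionally have $\left\Vert \nabla f\right\Vert =$ constant. First I would set $X=\nabla f/\left\Vert \nabla f\right\Vert$, which is a well-defined unit vector provided $\left\Vert \nabla f\right\Vert \neq 0$; because $\nabla f$ is constant, $X$ is a unit constant vector of $\mathbb{R}^{n}$, exactly the kind of vector required in Definition 2.6.

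Next, I would compute the inner product of the unit tangent $T$ with this candidate axis. Since $\alpha$ is an $f$-eikonal helix curve, the angle $\theta$ between $\nabla f$ and $T$ is constant along $\alpha$, so that $\left\langle \nabla f,T\right\rangle =\left\Vert \nabla f\right\Vert \cos (\theta )$ with $\theta$ constant. Dividing by the constant $\left\Vert \nabla f\right\Vert$ gives
\begin{equation*}
\left\langle X,T\right\rangle =\frac{\left\langle \nabla f,T\right\rangle }{\left\Vert \nabla f\right\Vert }=\cos (\theta )=\text{constant.}
\end{equation*}

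Finally, I would invoke Definition 2.6: the unit tangent $T$ makes a constant angle $\theta$ with the unit constant vector $X$, so $\alpha$ is a general helix with axis $X=\nabla f/\left\Vert \nabla f\right\Vert$. I do not expect any serious obstacle in the argument itself, as the computation is immediate once the candidate axis is chosen; the only point that requires care is reading the hypothesis ``constant axis $\nabla f$'' as asserting that $\nabla f$ is a genuinely constant vector (hence, after normalization, a unit constant vector) rather than merely a vector of constant length, since constant length alone would not supply the fixed direction vector that Definition 2.6 demands.
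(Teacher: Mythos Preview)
Your proposal is correct and follows essentially the same argument as the paper: both observe that the $f$-eikonal helix condition gives a constant angle between $T$ and $\nabla f$, and that when $\nabla f$ is itself a constant vector this is exactly the condition in Definition 2.6. Your version is in fact slightly more careful, since you explicitly normalize $\nabla f$ to obtain the \emph{unit} constant vector $X=\nabla f/\left\Vert \nabla f\right\Vert$ required by that definition, whereas the paper leaves this normalization implicit.
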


\begin{proof}
For all $f$-eikonal helix curves, we know that the angle between $\nabla f$
and unit tangent vector fields $T$ of these curves is constant along these
curves. Moreover, from this lemma, $\nabla f$ is constant. Therefore,
tangent vector fields of these curves make a constant angle with the
constant direction $\nabla f$. It follows that all $f$-eikonal helix curves
with the constant axis $\nabla f$ are general helices by using definition
2.6.
\end{proof}

\begin{theorem}
Let $M\subset 
\mathbb{R}
^{n}$ be a complete connected smooth Riemannian helix submanifold with
respect to the unit direction $d\in 
\mathbb{R}
^{n}$ and $f:M\rightarrow 
\mathbb{R}
$ be an affine function . Let us assume that $\alpha =\alpha \left( s\right)
:I\subset 
\mathbb{R}
\rightarrow M$ is a $f$-eikonal helix curve on $M$ with unit tangent $T$ .
Then, if the normal component $\xi $ of $d$ is parallel normal in the
direction $T$, then the $f$-eikonal helix curve $\alpha $ is a general
helix, where $T^{\ast }=\nabla f$ is the unit tangent component of the
direction $d$.
\end{theorem}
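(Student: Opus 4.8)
The plan is to prove that the axis $\nabla f$ is a \emph{constant} vector field along $\alpha$ and then to invoke Lemma 5.2. Since Lemma 5.2 already converts the $f$-eikonal helix property together with constancy of the axis into the general-helix conclusion, the whole task reduces to establishing $(\nabla f)^{\acute{}}=0$ along $\alpha$. So the strategy is: first control the intrinsic derivative $\nabla_{T}\nabla f$, then upgrade to the ambient derivative $D_{T}\nabla f$ using the parallel-normal hypothesis, and finally quote Lemma 5.2.

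First I would exploit that $f$ is affine. As recorded in the paper (Lemma 2.3, see [11]), an affine function has parallel gradient, so $\nabla _{X}\nabla f=0$ for every $X\in TM$; in particular $\nabla _{T}\nabla f=0$ along $\alpha$. This disposes of the intrinsic covariant derivative of the axis and uses only affineness, not any product structure on $M$.

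Next I would pass from the intrinsic to the ambient derivative $D_{T}\nabla f$ by means of the Gauss equation of Definition 2.3, namely $D_{T}\nabla f=\nabla _{T}\nabla f+V(T,\nabla f)$. Here the hypotheses that $T^{\ast }=\nabla f$ and that the normal component $\xi $ of $d$ is parallel normal in the direction $T$ place us exactly in the setting of Lemma 5.1: they force the second-fundamental-form term to vanish, $V(T,\nabla f)=0$ (this is line (5.2) in the proof of Lemma 5.1, obtained from (2.2) of Remark 2.2 with $0<\theta <\frac{\pi }{2}$). Combining this with the previous paragraph gives
\[
D_{T}\nabla f=(\nabla f)^{\acute{}}=\nabla _{T}\nabla f=0,
\]
so $\nabla f$ is a constant vector of $\mathbb{R}^{n}$ along $\alpha$; it is moreover a \emph{unit} vector, since $T^{\ast }=\nabla f$ and $\left\Vert T^{\ast }\right\Vert =1$.

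Finally, because $\alpha $ is an $f$-eikonal helix curve, the angle between $\nabla f$ and $T$ is constant along $\alpha$; with $\nabla f$ now known to be a fixed unit vector, Lemma 5.2 (equivalently, Definition 2.6 read directly) yields that $\alpha $ is a general helix with axis $\nabla f$. I expect the only genuine content to sit in the third paragraph: correctly coupling the intrinsic parallelism of $\nabla f$ coming from affineness with the vanishing of $V(T,\nabla f)$ coming from the parallel-normal hypothesis, so that the \emph{ambient} derivative $D_{T}\nabla f$ vanishes. Everything on either side of that step is bookkeeping.
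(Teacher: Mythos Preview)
Your proposal is correct and follows essentially the same route as the paper: use affineness of $f$ to get $\nabla_{T}\nabla f=0$, use the parallel-normal hypothesis together with (2.2) of Remark~2.2 (i.e., the content of Lemma~5.1) to kill $V(T,\nabla f)$, conclude from the Gauss equation that $(\nabla f)'=D_{T}\nabla f=0$, and finish with Lemma~5.2. The only cosmetic difference is that the paper phrases the middle step as ``Lemma~5.1 gives $(\nabla f)'\in TM$, hence $V(T,\nabla f)=0$ from the Gauss equation,'' while you cite the intermediate line (5.2) directly.
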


\begin{proof}
Since $T$ and $\nabla f$ $\in TM$, from the Gauss equation in Definition 2.3,%
\begin{equation}
D_{T}\nabla f=\nabla _{T}\nabla f+V(T,\nabla f)
\end{equation}%
Since $f$ is an affine function ,$\nabla f$ \ parallel in $M$, i.e. $\nabla
_{X}\nabla f=0$ for arbitrary $X\in TM$, $\nabla _{T}\nabla f=0$ (see Lemma
2.3. in [11]). On the other hand, according to the Lemma 5.1, $(\nabla f)^{%
{\acute{}}%
}\in $ $TM$ due to the fact that the normal component $\xi $ of $d$ is
parallel normal in the direction $T$. Therefore, from Gauss equation, $%
V(T,\nabla f)=0$. Hence, from (5.3), we have:%
\begin{equation*}
D_{T}\nabla f=\frac{d\nabla f}{ds}=(\nabla f)^{%
{\acute{}}%
}=0
\end{equation*}%
along the curve $\alpha $. That is, the axis $\nabla f$ of $\alpha $ is
constant. So, the above Lemma 5.2 follows that the $f$-eikonal helix curve $%
\alpha $ is a general helix. This completes the proof.
\end{proof}

\end{document}